\documentclass[a4paper]{scrartcl}

\usepackage{amsfonts,amsthm,amssymb,amsmath}
\usepackage[latin1]{inputenc}
\usepackage[T1]{fontenc}
\usepackage{graphicx}
\usepackage{enumitem}
\usepackage{marvosym}

\newtheorem{thm}{Theorem}[section]
\newtheorem{lem}[thm]{Lemma}
\newtheorem{con}[thm]{Conjecture}

\newcommand{\bigO}{\mathcal O}
\newcommand{\sph}[1]{S_{v_0}\! \left( #1 \right)}
\newcommand{\ceil}[1]{\left \lceil #1 \right \rceil}
\newcommand{\floor}[1]{\left \lfloor #1 \right \rfloor}
\newcommand{\res}[1]{\rvert_{#1}}
\newcommand {\Aut}  {\ensuremath{\operatorname{Aut}}}
\newcommand {\id}  {\ensuremath{\operatorname{id}}}

\renewcommand{\qedsymbol}{\Rectsteel}

\begin{document}

\title{Distinguishing graphs with intermediate growth}
\author{Florian Lehner\thanks{The author acknowledges the support of the Austrian Science Fund (FWF), project W1230-N13.}}
\maketitle
\begin{abstract}
A graph $G$ is said to be 2-distinguishable if there is a 2-labeling of its vertices which is not preserved by any nontrivial automorphism of $G$. We show that every locally finite graph with infinite motion and growth at most $\bigO \left( 2^{(1-\varepsilon)\frac{\sqrt n}{2}}\right)$ is 2-distinguishable. Infinite motion means that every automorphism moves infinitely many vertices and growth refers to the cardinality of balls of radius $n$.
\end{abstract}

\section{Introduction} 
\label{sec:intro}

The distinguishing number of a graph $G$ is the smallest number $d$ such that there is a coloring of the vertices of $G$ with $d$ colors which is not preserved by any nontrivial automorphism of $G$. Since  it was introduced by Albertson and Collins \cite{MR1394549} numerous papers investigating the distinguishing number of finite and infinite graphs have been published \cite{MR2880672,MR2408368,MR2262268,MR2302543,MR2261819,MR1617449,smtuwa,MR2302536}. 
Furthermore several results concerning the distinguishing number could be extended to endomorphisms \cite{imkapile} and uncountable graphs \cite{cuimle} as well as other combinatorial structures \cite{MR2781021,MR2255126,MR2587751}.

In this paper we will focus on infinite locally finite graphs. More precisely we will verify a conjecture of Tucker \cite{MR2776826} for a class of infinite, locally finite graphs.

Before stating Tucker's conjecture and the main result of this paper we introduce some notions. The motion $m(\varphi)$ of an automorphism $\varphi$ of a graph $G$ is the number of vertices moved by $\varphi$. The motion of $G$ is the minimal motion of a nontrivial automorphism. Intuitively it is clear that large motion implies low distinguishing number. In the finite case this intuition can be quantified by Lemma \ref{lem:motion_orig} which was first published by Russel and Sundaram \cite{MR1617449}. In the case of infinite locally finite graphs the following connection of motion and distinguishing number has been conjectured by Tucker \cite{MR2776826}.

\begin{con}
\label{con:tucker}
Every infinite, connected locally finite graph with infinite motion is 2-distinguishable.
\end{con}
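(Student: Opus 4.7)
The plan is to build a 2-coloring $c : V(G) \to \{0,1\}$ vertex by vertex. Fix a root $v_0$ and enumerate the vertices $v_0, v_1, v_2, \ldots$ by nondecreasing distance from $v_0$. At stage $n$, assign $c(v_n)$ so that the partial coloring on $V_n := \{v_0, \ldots, v_n\}$, however extended, cannot be preserved by a nontrivial automorphism. Equip $\Aut(G)$ with the topology of pointwise convergence; local finiteness makes $\Aut(G)$ a totally disconnected locally compact group with compact vertex stabilizers. This compactness is the main technical lever: if at every stage some nontrivial automorphism still respected the partial coloring, we could extract a pointwise-convergent subsequence whose limit $\varphi \in \Aut(G)$ preserves the entire coloring $c$, and one would then need to rule out $\varphi = \id$.

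\textbf{Exploiting infinite motion.} Call $\varphi \neq \id$ a \emph{live threat at stage $n$} if $c(\varphi(v_i)) = c(v_i)$ for every $i \le n$. Infinite motion is the structural input that makes each individual threat fragile: since the support of any nontrivial $\varphi$ is infinite, the orbit decomposition of $\varphi$ either contains an infinite orbit or infinitely many non-trivial finite orbits, so the set $\mathrm{Fix}(\varphi) \subset \{0,1\}^{V(G)}$ of $\varphi$-invariant colorings is both measure zero under the product measure and nowhere dense in the product topology. The goal is to construct a single coloring avoiding $\bigcup_{\varphi \neq \id} \mathrm{Fix}(\varphi)$, and in the inductive language, to kill each threat $\varphi$ at some stage by choosing $c(v_{n+1})$ to disagree with the color already assigned to the image $\varphi(v_{n+1})$ whenever that image lies in $V_n$.

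\textbf{Main obstacle.} The genuine difficulty---and the reason Tucker's conjecture remains open in full generality---is that $\Aut(G)$ may be uncountable, as already happens for the infinite regular tree, which satisfies infinite motion. Hence neither a naive union bound, nor a direct Baire-category argument, nor a countable diagonalization applies off the shelf. In this paper, the growth hypothesis overcomes the obstacle by bounding the number of distinct restrictions of automorphisms to a ball $\sph{n}$, so that only subexponentially many threats survive and a probabilistic union bound on $\{0,1\}^{\sph{n}}$ succeeds. Without any growth assumption, one needs a replacement finitary invariant. The most promising direction I see is to group live threats into equivalence classes by their action on $V_n$ and kill classes one at a time, using infinite motion to produce a splitting vertex on a later sphere for each class; the key question is whether the number of surviving classes remains controllable, or alternatively whether $\bigcup_{\varphi \neq \id} \mathrm{Fix}(\varphi)$ can be shown to be meager via a more clever stratification of $\Aut(G)$ into countably many pieces that each contribute a nowhere-dense fixed set. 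This step, rather than the compactness machinery or the use of infinite motion against a single automorphism, is where all the real work lies.
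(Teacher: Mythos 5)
This statement is Conjecture \ref{con:tucker}, which the paper labels as a conjecture precisely because it is open: there is no proof of it in the paper. The paper only establishes the special case in which $G$ additionally has growth $\bigO\!\left(2^{(1-\varepsilon)\frac{\sqrt n}{2}}\right)$, namely Theorem \ref{thm:growthmotion}. You correctly recognize this---your sketch identifies the uncountability of $\Aut(G)$ as the central obstruction and never claims to close it---so what you have written is an accurate diagnosis of the difficulty rather than a proof. Read as a proof, the proposal has a fatal gap in exactly the place you flag yourself: there is no control whatsoever on the number of surviving equivalence classes of live threats at each stage, so neither a union bound nor a Baire-category argument can actually be executed.

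It is worth noting that your suggested remedy---stratify the live threats into classes and kill one class per block of spheres, using infinite motion to force disagreement further out---is essentially the template the paper does carry through in its restricted setting. After fixing $v_0$ via Lemma \ref{lem:fixroot}, the automorphisms that act nontrivially on $\sph{m}$ are partitioned into sets $A_i$ according to the size of their restricted motion on spheres above $P_i$, and each $A_i$ is destroyed by a coloring of its dedicated block $P_i$ via Lemma \ref{lem:motion}. The growth hypothesis enters exactly at the point you isolated: inequality \eqref{ineq:spheresize} bounds $\lvert A_i\res{P_i}\rvert$ so that $m(A_i)\res{P_i} > 2\log\lvert A_i\res{P_i}\rvert$ holds. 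Two further cautions on your auxiliary claims: the compactness argument is more delicate than you suggest, because a pointwise limit of nontrivial coloring-preserving automorphisms can be the identity, so extracting a genuine obstruction needs extra structure; and while each individual set of $\varphi$-invariant colorings is indeed nowhere dense and measure zero, a union over uncountably many $\varphi$ need not be meager or null, which is the same obstruction restated rather than resolved.
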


While this conjecture is still open in its full generality, it is known to be true for many classes of graphs including trees \cite{MR2302536}, tree-like graphs \cite{MR2302543} and graphs with countable automorphism group \cite{istw}. It is also known that the countable random graph has infinite motion as well as distinguishing number 2 \cite{MR2302543}.

From the 2-distinguishability of graphs with countable automorphism group it follows that all graphs with linear growth and infinite motion have distinguishing number 2. Growth here refers to the growth of balls with fixed center and increasing radius. Recently it has been shown by Cuno et al.\,\cite{cuimle} that the statement of Conjecture \ref{con:tucker} remains true if the growth is slightly superlinear, that is, for graphs of growth $o(\frac{n^2}{\log n})$. The main result of this paper is a further step in this direction:

\begin{thm}
\label{thm:growthmotion}
Let $G$ be a connected graph with infinite motion and growth $\bigO \! \left(2^{(1-\varepsilon) \frac {\sqrt n} 2}\right )$. Then $G$ is 2-distinguishable.
\end{thm}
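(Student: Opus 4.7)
My plan follows the random 2-colouring approach of Russell--Sundaram, adapted to infinite graphs as in Cuno, Imrich, and Lehner~\cite{cuimle}, but with a sharper balance between type-counting and forced motion calibrated to the growth exponent $(1-\varepsilon)/2$ in the hypothesis.

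Fix a root $v_0$ and write $B_r = B_{v_0}(r)$. Pick a uniformly random $2$-colouring $c : V(G) \to \{0,1\}$; the aim is to show that with positive probability no nontrivial $\varphi \in \Aut(G)$ preserves $c$. The union bound is organised as follows. First restrict to $\varphi$ fixing $v_0$ (automorphisms moving $v_0$ will be absorbed into a short separate argument). Partition these by the level $r(\varphi) = \min\{r : \varphi\res{B_r} \neq \id\}$; within each level set $\mathcal{A}_r$, group elements into equivalence classes (\emph{$r$-types}) that agree on $B_r$. Whether an $r$-type preserves $c$ on $B_r$ depends only on the type.

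The proof reduces to two quantitative claims. \textbf{Type count:} the number of $r$-types is at most $|B_r|$; by local finiteness and the fact that each type fixes $v_0$, a type is determined up to bounded ambiguity by the image of a single chosen vertex of $\sph{r}$. \textbf{Forced motion:} every $r$-type moves at least $(1 - \tfrac{\varepsilon}{2})\sqrt{r}$ vertices inside $B_r$ for $r$ sufficiently large. Granted these, a fixed $r$-type preserves $c$ on $B_r$ with probability at most $2^{-(1-\varepsilon/2)\sqrt{r}/2}$, by extracting disjoint $\{v,\varphi(v)\}$-pairs among the forced moved vertices. Substituting the growth hypothesis $|B_r| \leq 2^{(1-\varepsilon)\sqrt{r}/2}$ yields
\[
\sum_{r \geq r_0} |T_r| \cdot 2^{-(1-\varepsilon/2)\sqrt{r}/2} \;\leq\; \sum_{r \geq r_0} 2^{-\varepsilon \sqrt{r}/4},
\]
which is summable and less than $\tfrac{1}{2}$ for $r_0$ sufficiently large. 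The finitely many types at levels $1 \leq r < r_0$, together with the level-$0$ contribution (where one must also argue about the $\Aut(G)$-orbit of $v_0$), are dispatched by a one-off application of Lemma~\ref{lem:motion_orig} to a sufficiently large finite subgraph, consuming the remaining probability budget.

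The main obstacle is the forced-motion claim: extracting from infinite motion the conclusion that an automorphism first nontrivial at level $r$ moves at least $(1-\tfrac{\varepsilon}{2})\sqrt{r}$ vertices inside $B_r$. Infinite motion enters essentially here, since without it $\varphi$ could in principle swap a single pair of vertices on $\sph{r}$ and leave $B_r$ otherwise pointwise fixed. My plan for the propagation is iterative, starting from the single guaranteed moved vertex $w \in \sph{r}$: if only $o(\sqrt{r})$ vertices of $B_r$ were moved, one ought to be able to splice $\varphi$ with the identity on a large unmoved region to manufacture a second nontrivial automorphism of finite motion, contradicting the hypothesis. Converting this qualitative obstruction into the quantitative $\sqrt{r}$ threshold, tight enough to beat the $(1-\varepsilon)/2$ exponent in the growth hypothesis rather than giving a weaker bound such as $\log r$, is the delicate technical heart of the argument.
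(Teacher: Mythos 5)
Your plan rests on two quantitative claims, and both are false --- not for lack of technical polish, but for structural reasons that any correct proof must confront directly.

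The type-count claim (``the number of $r$-types is at most $|B_{v_0}(r)|$, a type being determined up to bounded ambiguity by the image of a single vertex of $\sph{r}$'') already fails for trees. If several vertices of $\sph{r-1}$ each have several interchangeable children in $\sph{r}$, the permutations of $\sph{r}$ extending to automorphisms that fix $B_{v_0}(r-1)$ pointwise are independent across parents, so their number grows multiplicatively. In general the restriction of the automorphism group to $B_{v_0}(r)$ can have size close to $|\sph{r}|!$, and the growth hypothesis only bounds $|\sph{r}|$ by roughly $2^{(1-\varepsilon)\sqrt r/2}$; the type count can therefore be super-exponential in $\sqrt r$, not linear in $|B_{v_0}(r)|$.

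The forced-motion claim fails even more basically. If $\varphi$ is first nontrivial at level $r$, then the only vertices of $B_{v_0}(r)$ it moves lie on $\sph{r}$, and nothing in the infinite-motion hypothesis prevents $\varphi$ from moving just two of them. Infinite motion is a statement about all of $G$: $\varphi$ can swap a single pair on $\sph{r}$ and then move ever more vertices on higher spheres. Your splicing heuristic does not rule this out, because ``splicing $\varphi$ with the identity near $\sph{r}$'' does not in general produce an automorphism; there is no reason the moved part of $G - V_{\text{fix}}$ should have a small cut near $\sph{r}$. Lemma~\ref{lem:infcomp} says exactly that components of $G - V_{\text{fix}}$ are infinite, which is why they cannot be spliced away. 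Consequently your union bound must accommodate, at every level $r$, types that move only two vertices of $B_{v_0}(r)$ --- each preserved by a random colouring with probability $1/2$ --- and the sum $\sum_r |T_r| \cdot 2^{-1}$ diverges.

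The paper sidesteps both failures by never trying to force motion on a single sphere. Fixing a window $\sph{m+1},\dots,\sph{m+k}$, it partitions the automorphisms not by where they first move but by \emph{how much} they move on the remaining spheres: $A_i$ consists of those whose restricted motion on every sphere of $P_i$ exceeds $2^{i-1}$ while they move at most $2^i$ vertices on some sphere above $P_i$. Small motion on some sphere buys a small count, $|A_i\res{P_i}| \le k\binom{|\sph{j}|}{2^i}(2^i)!$, so Lemma~\ref{lem:motion} applies over many spheres $P_i$ where the motion accumulates; classes with uniformly large motion are handled directly on $P_r$. This tradeoff between motion and counting inside a sliding window --- rather than a pointwise lower bound on motion at a fixed radius --- is the idea your sketch is missing, and it is precisely how infinite motion is converted into something quantitative.
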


The rest of this paper is structured as follows. In Sections \ref{sec:notions} and \ref{sec:auxiliary} we introduce all notions and tools needed for the proof of the main result. Since the proof of Theorem \ref{thm:growthmotion} is rather technical, Section \ref{sec:proof} contains a detailed outline of the main proof ideas prior to the actual proof. Finally, in Section \ref{sec:ends} we prove a result similar to Theorem \ref{thm:growthmotion} for graphs with countably many ends and outline a common extension of the two results.

\section{Notions and notations}
\label{sec:notions}

Throughout this paper $\log$ denotes the base 2 logarithm, $G$ denotes a connected locally finite graph with vertex set $V$ and edge set $E$, and $\Aut G$ denotes the set of automorphisms of $G$. All graph theoretical notions which are not explicitly defined will be taken from \cite{MR2159259}. 

For a vertex $v_0 \in V$ the \emph{sphere} $\sph{n}$ of radius $n$ with center $v_0$ is the set of all vertices $v$ such that $d(v_0,v) = n$, where $d$ denotes the natural geodesic metric on $G$. The \emph{ball} $B_{v_0}\! (n)$  is the set of all vertices $v$ such that $d(v, v_0) \leq n$. It is immediate that $B_{v_0} \! (n) = \bigcup_{k\leq n} \sph k$ and that this union is disjoint. A graph has growth $\bigO(f(n))$ if the cardinality of the ball of radius $n$ is $\bigO(f(n))$. Similarly we can define graphs of growth $o(f(n))$ and $\omega(f(n))$. Note that this does not depend on the choice of the basepoint $v_0$. By a \emph{graph of intermediate growth} we mean a graph whose growth is superpolynomial, but still not exponential, that is, the growth is $\omega(n^c)$ and $o(c^n)$ for every constant $c>1$.

A \emph{coloring of a set $S$} is a map $c\colon S \to C$ where $C$ is a finite set. We refer to the elements of $C$ as \emph{colors}. Mostly $C$ will be the set $\{\text{black},\text{white}\}$ in which case we speak of a \emph{2-coloring of $S$}. By a \emph{partial coloring of $S$} we mean a map $c' \colon S' \to C$ where $S' \subset S$. The set $S'$ is called the \emph{support of the partial coloring $c'$}. A \emph{(partial) coloring of the graph $G$} always refers to a (partial) coloring of the vertices of $G$, that is $S=V$.

Let $\varphi$ be a permutation of a set $S$. Throughout this paper one can usually think of $\varphi$ as a permutation of the vertex set of a graph $G$ induced by an automorphism $\varphi \in \Aut(G)$. We say that $\varphi$ \emph{preserves a coloring $c$} if $c(s)= c(\varphi(s))$ holds for every $s \in S$. The permutation $\varphi$ \emph{preserves a partial coloring $c'$} if we can extend $c'$ to a coloring $c$ which is preserved by $\varphi$. If $\varphi$ does not preserve a (partial) coloring $c$ we say that $c$ \emph{breaks} $\varphi$. 

Let $S$ be a set and let $A$ be a set of permutations of $S$. We say that a (partial) coloring $c$ of $S$ is \emph{$A$-distinguishing} if every permutation in $A$ is broken by $c$. A coloring of a graph $G$ is \emph{distinguishing} if it is $A$-distinguishing for $A = \Aut(G) \setminus \{\id\}$. A graph $G$ is \emph{$d$-distinguishable} if there is a distinguishing coloring with $d$ colors. The \emph{distinguishing number} of $G$ is the smallest number $d$ such that $G$ is $d$-distinguishable.

Let $\varphi$ be a permutation of a set $S$. The \emph{motion} $m(\varphi)$ is the number of elements of $S$ which are not fixed by $\varphi$. If $\varphi$ fixes $S' \subset S$ as a set we define the \emph{restriction} $\varphi \res{S'}$ of $\varphi$ to $S'$ to be the permutation which $\varphi$ induces on $S'$. The \emph{restricted motion} $m(\varphi)\res{S'}$ is the number of elements in $S'$ which are not fixed by $\varphi$. For a set $A$ of permutations we define the \emph{restriction} $A \res{S'}$ to be the set of all distinct permutations $\varphi \res{S'}$ where $\varphi \in A$. The \emph{motion} $m(A)$ is defined as the minimal motion of all permutations in $A$. Analogously define the \emph{restricted motion} $m(A) \res{S'}$. The \emph{motion of a graph $G$} is the motion of $\Aut(G) \setminus \{\id\}$. 

We could define the minimum of the empty set as $\infty$, so a graph with no nontrivial automorphism has infinite motion by default. However, graphs with no nontrivial automorphisms are trivially 2-distinguishable, hence they will not play any role in the sequel.

\section{Auxiliary results}
\label{sec:auxiliary}

In this section we would like to present some results that will be used in the proof of Theorem \ref{thm:growthmotion}.

The following lemma from \cite{cuimle} basically states that by coloring a small fraction of the vertices of $G$ we can break all automorphisms that move a given vertex.
\begin{lem}
\label{lem:fixroot}
Let $G =(V,E)$ be an infinite, locally finite, connected graph with infinite motion, $v_0 \in V$. For every $\delta >0$ there is a partial coloring $c$ of the vertices of $G$ with the following properties:
\begin{enumerate}
\item $c$ is $A$-distinguishing for $A= \{\varphi \in \Aut G  \mid \varphi(v_0) \neq v_0 \}$.
\item There is $k_0$ such that less than $\delta k$ of the spheres $\sph{m+1}, \ldots , \sph {m+k}$ are colored for every $k > k_0$ and every $m \in \mathbb N$.{\hfill \qedsymbol}
\end{enumerate}
\end{lem}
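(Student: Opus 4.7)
The plan is to break the automorphisms in $A := \{\varphi \in \Aut G : \varphi(v_0) \neq v_0\}$ one target image at a time. Enumerate $V \setminus \{v_0\} = \{w_1, w_2, \ldots\}$ (possible since $G$ is countable) and set $A_i := \{\varphi \in \Aut G : \varphi(v_0) = w_i\}$. A partial coloring is $A$-distinguishing precisely when it breaks each $\varphi \in \bigcup_i A_i$. I would construct $c$ with support $\bigcup_i T_i$ for pairwise disjoint finite sets $T_i$, each concentrated in a band of spheres around some radius $n_i$, so that the coloring restricted to $T_i$ breaks every $\varphi \in A_i$. Choosing $n_i$ to grow fast enough relative to the band widths and to $1/\delta$ then yields the sparsity bound in item~(2) via a routine window-counting argument.

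To construct $T_i$ and the coloring on it, I would exploit infinite motion through a compactness argument. Although $A_i$ may be infinite, it is closed, and by local finiteness even compact, in the pointwise-convergence topology on $\Aut G$; moreover $\id \notin A_i$, so every $\varphi \in A_i$ moves infinitely many vertices. The restricted-motion function $\varphi \mapsto m(\varphi)\res{B_{v_0}(R)}$ is lower semicontinuous on $A_i$ and monotone nondecreasing in $R$, with pointwise limit $+\infty$. A Dini-type argument shows that for every $M$ there exists $R$ with $m(\varphi)\res{B_{v_0}(R)} > M$ uniformly in $\varphi \in A_i$. Once the finite permutation set $A_i \res{B_{v_0}(R)}$ has uniformly large motion, the Russell--Sundaram style probabilistic argument of Lemma~\ref{lem:motion_orig}, applied to a judiciously chosen subset $T_i \subseteq B_{v_0}(R)$, supplies a 2-coloring of $T_i$ breaking every restriction and hence every element of $A_i$.

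The main obstacle is a quantitative trade-off. Because $\varphi \in A_i$ sends $\sph{n}$ onto the translated sphere of the same radius around $w_i$ rather than back into $\sph{n}$, the support $T_i$ must contain, for each $\varphi$, both some vertex and its $\varphi$-image; this forces the band around $n_i$ to have width on the order of $d(v_0, w_i)$. Moreover, the Russell--Sundaram inequality $\lvert A_i \res{T_i} \rvert < 2^{m(A_i \res{T_i})/2}$ is not automatic: the cardinality of $A_i\res{T_i}$ can grow very fast with $|T_i|$, so $T_i$ must be selected to keep $\lvert A_i \res{T_i} \rvert$ manageable while motion remains large. Producing such a $T_i$ is the substantive part of the argument. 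Once this has been done, the sparsity conclusion is easy: we have complete freedom over how fast $n_i$ grows, and choosing consecutive bands separated by much more than $L_i/\delta$ (where $L_i$ is the band width) delivers item~(2) for all $k$ above an explicit threshold $k_0$.
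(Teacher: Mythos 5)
The paper does not prove this lemma; it cites the construction from \cite{cuimle} and moves on. So the only thing to assess is whether your sketch stands on its own, and there is a genuine gap in it. The compactness observation is fine: when $A_i=\{\varphi:\varphi(v_0)=w_i\}$ is nonempty it is the coset $\psi\cdot(\Aut G)_{v_0}$ of the compact stabilizer, the restricted-motion maps $\varphi\mapsto m(\varphi)\res{B_{v_0}(R)}$ are locally constant and increase pointwise to $\infty$, and a Dini-type covering argument does give uniform lower bounds on motion inside a fixed ball. But the next step is where the plan breaks down. Lemma~\ref{lem:motion} requires a finite set $S'$ that is \emph{fixed as a set} by every permutation in the family. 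No ball $B_{v_0}(R)$, no sphere, and no union of spheres is fixed by any $\varphi\in A_i$, since $\varphi(B_{v_0}(R))=B_{w_i}(R)\neq B_{v_0}(R)$. The double-counting argument behind Lemma~\ref{lem:motion} silently collapses when orbits leave the support: if an orbit meets $T_i$ in a single vertex, that vertex can be colored freely and contributes nothing to the count of constrained cells, so with $\varphi$ pushing most of $T_i$ out of itself one gets no contradiction at all. You acknowledge this (``$T_i$ must contain, for each $\varphi$, both some vertex and its $\varphi$-image'') and then explicitly leave the construction of such a $T_i$ --- simultaneously approximately $\varphi$-invariant for all $\varphi\in A_i$ and with $|A_i\res{T_i}|$ under control --- as ``the substantive part of the argument.'' That is precisely the part of the lemma that needs to be proved; deferring it means the proof is not complete.

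There is also a secondary tension with the sparsity bound (2). By your own estimate the band $T_i$ spans on the order of $d(v_0,w_i)$ spheres, a quantity that is unbounded in $i$. Statement (2) demands a single $k_0$ working for all $m$: a window of length $k$ positioned on the band $T_i$ would then contain a proportion of colored spheres close to $1$ whenever $k\approx d(v_0,w_i)$, and since $d(v_0,w_i)\to\infty$ no fixed $k_0$ survives unless the bands themselves are made internally sparse. That is not addressed. The cited construction in \cite{cuimle} is, as the paper notes, essentially hands-on and does not run through a motion-lemma argument at all for this step, precisely because automorphisms moving $v_0$ admit no fixed finite ball; it instead colors a thin, carefully chosen family of vertices whose pattern encodes distances from $v_0$ rigidly. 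Your sketch routes through machinery (Lemma~\ref{lem:motion}) whose hypotheses cannot be met here, so the central step would have to be replaced by something qualitatively different.
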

The proof of the above lemma is constructive. Readers interested in the details of the construction are referred to \cite{cuimle}.

Since we now can focus on automorphisms that fix a given vertex $v_0$ it would be convenient to know some properties of these automorphisms. The following lemma which can be found in \cite{cuimle} summarizes several useful properties.
\begin{lem}
\label{lem:higher}
Let $G =(V,E)$ be an infinite, locally finite, connected graph with infinite motion. Let $\varphi, \varphi' \in \Aut G$ and assume that there is $v_0 \in V$ such that $\varphi(v_0) = v_0$. Then
\begin{enumerate}
\item for every $i \in \mathbb N$ it holds that $\varphi$ fixes $\sph i$ as a set,
\item $m(\varphi) \res{\sph i} > 0$ implies that $\forall j > i \colon m(\varphi) \res{\sph j} > 0$,
\item $\varphi \res{\sph i} = \varphi' \res{\sph i}$ if and only if $\varphi \res{B_{v_0}\!(i)} = \varphi' \res{B_{v_0}\!(i)}$. {\hfill \qedsymbol}
\end{enumerate}
\end{lem}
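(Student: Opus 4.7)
The plan is to prove the three parts in order, with part (1) setting up the permutation-of-spheres picture, part (2) providing the structural engine via a cut-and-paste construction that exploits infinite motion, and part (3) falling out as a quick corollary of (2). Part (1) should be immediate: automorphisms are isometries, so $\varphi(v_0)=v_0$ gives $d(v_0,\varphi(v))=d(\varphi(v_0),\varphi(v))=d(v_0,v)$ for every $v$, forcing $\varphi(\sph i)\subseteq\sph i$, and the reverse inclusion follows by applying the same reasoning to $\varphi^{-1}$.

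For part (2) I would argue by contradiction. Assume that $\varphi$ fixes $v_0$, moves some $v\in\sph i$, yet fixes $\sph j$ pointwise for some $j>i$. Define $\psi\colon V\to V$ by $\psi(u)=\varphi(u)$ if $u\in B_{v_0}\!(j-1)$ and $\psi(u)=u$ otherwise. By part (1), $\varphi$ permutes the finite set $B_{v_0}\!(j-1)$, so $\psi$ is a bijection of $V$; edges within $B_{v_0}\!(j-1)$ and within $V\setminus B_{v_0}\!(j-1)$ are obviously preserved, and the one delicate check is edges $\{u,w\}$ with $u\in\sph{j-1}$ and $w\in\sph j$. For such an edge, $\psi(w)=w=\varphi(w)$ because $\sph j$ is fixed pointwise, while $\psi(u)=\varphi(u)$, so $\{\psi(u),\psi(w)\}=\{\varphi(u),\varphi(w)\}$ is indeed an edge since $\varphi\in\Aut G$. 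Hence $\psi\in\Aut G$; it is nontrivial because $\psi(v)=\varphi(v)\neq v$; and $m(\psi)\leq|B_{v_0}\!(j-1)|<\infty$ by local finiteness, contradicting the infinite motion hypothesis.

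For part (3) the direction $\Leftarrow$ is trivial since $\sph i\subseteq B_{v_0}\!(i)$. For $\Rightarrow$, set $\tau=\varphi^{-1}\varphi'$; both $\varphi$ and $\varphi'$ must fix $v_0$ for the equality $\varphi\res{\sph i}=\varphi'\res{\sph i}$ to be meaningful, so $\tau$ is an automorphism fixing $v_0$ with $\tau\res{\sph i}=\id_{\sph i}$. If $\tau=\id$ we are done; otherwise $\tau$ is a nontrivial automorphism of $G$ and the contrapositive of (2) applied to $\tau$ forces $\tau$ to fix $\sph k$ pointwise for every $k\leq i$, so $\tau$ fixes $B_{v_0}\!(i)$ pointwise, which is exactly $\varphi\res{B_{v_0}\!(i)}=\varphi'\res{B_{v_0}\!(i)}$. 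The main obstacle will be the verification in (2) that the cut-and-paste map $\psi$ is actually a graph automorphism: this is precisely where the hypothesis that $\sph j$ be fixed \emph{pointwise} (rather than merely setwise) is indispensable, since any moved vertex in $\sph j$ would destroy the edge-preservation check at the boundary. Once that verification is in hand, local finiteness converts the finiteness of $B_{v_0}\!(j-1)$ into the required contradiction with infinite motion, and (3) reduces to bookkeeping.
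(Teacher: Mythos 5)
Your proof is correct and follows essentially the approach the paper sketches: part (1) from the isometry property, part (2) by a cut-and-paste contradiction exploiting infinite motion and local finiteness, and part (3) reduced to part (2) via the automorphism $\varphi^{-1}\varphi'$. The only cosmetic gap is that you verify $\psi$ maps edges to edges but not explicitly that it is an automorphism (i.e., that $\psi^{-1}$ also preserves edges); this is immediate since $\psi^{-1}$ has the same form with $\varphi$ replaced by $\varphi^{-1}$, which likewise fixes $v_0$ and $\sph j$ pointwise.
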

The proof of this lemma is easy and straightforward. The first property follows from the fact that an automorphism is an isometry. The second part is proved by contradiction using the fact that $G$ has infinite motion. In the third property one implication is trivial while the other one can be derived from the second property.

Next, the proof of Theorem \ref{thm:growthmotion} relies heavily on the following lemma which was first proved by Russel and Sundaram \cite{MR1617449}. Although the result we need is somewhat stronger than the result in \cite{MR1617449}, it can be shown in a very similar way. For the convenience of the reader we include a short proof.

\begin{lem}
\label{lem:motion}
Let $S$ be a set and let $A$ be a set of permutations of $S$. Let $S' \subset S$ be a finite set that is fixed by every $\varphi \in A$. If
\[
	m(A)\res{S'} > 2 \log \left \vert A \res{S'} \right \vert,
\]
then there is an $A$-distinguishing partial 2-coloring of $S$ with support $S'$.
\end{lem}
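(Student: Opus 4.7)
The plan is to prove this by the probabilistic method, essentially the original Russell--Sundaram argument. First I would clear up the reduction from partial colorings of $S$ with support $S'$ to ordinary colorings of $S'$. Since $S'$ is fixed setwise by every $\varphi\in A$, a partial 2-coloring $c$ with support $S'$ is preserved by $\varphi$ in the sense defined in Section~\ref{sec:notions} if and only if $c(s)=c(\varphi(s))$ for every $s\in S'$; one direction is immediate, and the other follows by extending $c$ arbitrarily but constantly on each $\varphi$-orbit outside $S'$. Consequently $c$ breaks $\varphi$ if and only if it breaks the restriction $\varphi\res{S'}$, so it suffices to find a 2-coloring of $S'$ that simultaneously breaks every element of $A\res{S'}$. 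Note also that the hypothesis $m(A)\res{S'}>2\log|A\res{S'}|\geq 0$ forces every $\psi\in A\res{S'}$ to be nontrivial, so there is no automatically preserved element to worry about.

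Next I would choose $c\colon S'\to\{\text{black},\text{white}\}$ uniformly at random and estimate the probability that a fixed $\psi\in A\res{S'}$ is preserved. Writing $\psi$ as a product of disjoint cycles on $S'$, the coloring $c$ is preserved by $\psi$ exactly when $c$ is constant on every cycle of $\psi$. If $\psi$ has $f$ fixed points and $k$ nontrivial cycles on its $m(\psi)$ moved points, then the number of preserved colorings is $2^{f+k}$, so
\[
\Pr[\psi \text{ preserves } c] = 2^{f+k-|S'|} = 2^{k-m(\psi)} \leq 2^{-m(\psi)/2},
\]
where the last inequality uses that each nontrivial cycle has length at least $2$, giving $k\leq m(\psi)/2$.

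Finally I would apply a union bound over $A\res{S'}$. Since each $\psi\in A\res{S'}$ satisfies $m(\psi)\geq m(A)\res{S'}$, the probability that some $\psi$ is preserved is at most
\[
|A\res{S'}|\cdot 2^{-m(A)\res{S'}/2} < 1,
\]
by the hypothesis $m(A)\res{S'} > 2\log|A\res{S'}|$. Hence with positive probability the random coloring breaks every element of $A\res{S'}$, and by the reduction above the corresponding partial 2-coloring of $S$ with support $S'$ is $A$-distinguishing. There is no serious obstacle here; the only point that requires a moment of care is the equivalence between breaking $\varphi$ on $S$ and breaking $\varphi\res{S'}$, which crucially uses that $S'$ is $\varphi$-invariant.
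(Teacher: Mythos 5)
Your proof is correct and is essentially the paper's own argument: the probabilistic union bound is the same computation as the paper's double-counting, and the key cycle estimate (number of cycles at most $\vert S' \vert - m/2$, hence at most $2^{\vert S'\vert-m/2}$ preserved colorings) is identical. The explicit reduction from partial colorings of $S$ with support $S'$ to ordinary 2-colorings of $S'$, using that $S'$ is $\varphi$-invariant, is a detail the paper handles silently, and your treatment of it is correct.
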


\begin{proof} 
Denote by $C$ the set of all partial 2-colorings with support $S'$. Double counting yields
\[
	\sum _{\varphi \in A \res{S'}} \left \vert \{ c \in C \mid \varphi \text{ preserves } c\} \right \vert
	=
	\sum _{c \in C} \left \vert \{ \varphi \in A \res{S'} \mid \varphi \text{ preserves } c\} \right \vert.
\]
If the latter sum is less than $2^{\vert S' \vert}$ then there has to be at least one 2-coloring which is not preserved by any $\varphi \in A \res{S'}$.

To estimate the first sum note that the number of 2-colorings that are preserved by a given permutation $\varphi \in A\res{S'}$ is equal to $2^p$ where $p$ is the number of cycles in the permutation. Clearly there are $\vert S' \vert - m(\varphi)\res{S'}$ singleton cycles while the rest of the elements of $S'$ is partitioned in at most $\frac{m(\varphi)\res{S'}}{2}$ cycles of length at least $2$. So we have
\[
	p \leq \vert S' \vert -\frac{m(A)\res{S'}}{2}
\]
and we can estimate the sum 
\[
	\sum _{\varphi \in A\res{S'}} \left \vert \{ c \in C \mid \varphi \text{ preserves } c\} \right \vert
	\leq \sum _{\varphi \in A\res{S'}} 2^{\vert S' \vert -\frac{m(A)\res{S'}}{2}}
	< \vert A\res{S'} \vert \,  2^{\vert S' \vert} \, 2^{- \log \vert A\res{S'} \vert}
	= 2^{\vert S' \vert} \qedhere
\]

\end{proof}

Putting $S=S'=V$ and $A = \Aut G \setminus \{\id\}$ for a finite graph $G=(V,E)$ we obtain the formulation of \cite{MR1617449} as a corollary:
\begin{lem}[Motion Lemma]
\label{lem:motion_orig}
Let $G$ be a finite graph. If the motion of $G$ is larger than $2 \log \vert \Aut G \vert$, then $G$ is $2$-distinguishable.{\hfill \qedsymbol}
\end{lem}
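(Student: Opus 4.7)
The plan is to deduce the Motion Lemma immediately from Lemma \ref{lem:motion} by specializing all parameters to the whole graph. Concretely, I would take $S = S' = V$ and $A = \Aut G \setminus \{\id\}$, so that, by definition, an $A$-distinguishing coloring of $S$ with support $S'$ is nothing other than a distinguishing $2$-coloring of $G$.

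Next I would verify that the hypotheses of Lemma \ref{lem:motion} are met. The set $S' = V$ is finite because $G$ is finite, and it is trivially invariant as a set under every permutation of $V$, so in particular under every $\varphi \in A$. Restricting a permutation of $V$ to $V$ itself changes nothing, so $A \res{S'} = A$, and hence $m(A)\res{S'} = m(A) = m(G)$ and $|A\res{S'}| = |\Aut G| - 1$. We may assume $|\Aut G| \geq 2$, as otherwise there is nothing to prove, and then the motion hypothesis of the Motion Lemma yields
\[
    m(A)\res{S'} = m(G) > 2 \log |\Aut G| > 2 \log (|\Aut G| - 1) = 2 \log |A\res{S'}|,
\]
which is precisely the inequality required by Lemma \ref{lem:motion}.

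Applying Lemma \ref{lem:motion} therefore produces an $A$-distinguishing partial $2$-coloring of $V$ with support $V$, i.e., a genuine $2$-coloring of $G$ broken by every nontrivial automorphism, showing that $G$ is $2$-distinguishable.

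There is essentially no obstacle here; the statement is presented in the text as a direct corollary, and all the real work has already been carried out in the proof of Lemma \ref{lem:motion} via the double-counting argument bounding the number of color-preserving pairs by $2^{|S'|}$. The only thing one needs to be slightly careful about is the trivial case $|\Aut G| = 1$, which does not fall under the Motion Lemma's nontrivial content but is immediate.
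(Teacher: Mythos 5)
Your proposal is correct and matches the paper exactly: the paper itself presents Lemma \ref{lem:motion_orig} as an immediate corollary of Lemma \ref{lem:motion} obtained by putting $S = S' = V$ and $A = \Aut G \setminus \{\id\}$. Your additional care about the case $|\Aut G| = 1$ and the step $2\log|\Aut G| > 2\log(|\Aut G| - 1)$ is a minor but sound tidying of the argument the paper leaves implicit.
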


Finally we present a result which will be useful in the proof of the extension of Theorem \ref{thm:growthmotion} in Section \ref{sec:ends}. 

\begin{lem}
\label{lem:infcomp}
Let $G$ be a graph with infinite motion, let $\varphi \in \Aut (G)$ and denote by $V_{\text{fix}} \subseteq V$ the set of fixed points of $\varphi$. Then the graph $G - V_{\text{fix}}$, which is obtained from $G$ by removing $V_{\text{fix}}$ and all incident edges, has only infinite components.
\end{lem}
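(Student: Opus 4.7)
The plan is to argue by contradiction: suppose $C$ is a finite component of $G - V_{\text{fix}}$, and construct from it a nontrivial automorphism of $G$ with finite motion, contradicting the infinite motion hypothesis. Two preliminary observations drive the construction. First, because $\varphi$ fixes $V_{\text{fix}}$ (pointwise, hence setwise) and is an automorphism, it permutes the components of $G - V_{\text{fix}}$; in particular $\varphi(C)$ is again a finite component of the same size as $C$. Second, any edge of $G$ with one endpoint in a component $C'$ of $G - V_{\text{fix}}$ and the other outside $C'$ must have that other endpoint in $V_{\text{fix}}$, since distinct components of $G-V_{\text{fix}}$ are not joined by edges of $G - V_{\text{fix}}$.

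I would then split on whether the orbit $\mathcal{O} = \{\varphi^i(C) : i \in \mathbb{Z}\}$ of components is finite or infinite. If $\mathcal{O}$ is finite, let $X$ be the (finite) union of its members; this set is $\varphi$-invariant and disjoint from $V_{\text{fix}}$. Define $\psi$ to equal $\varphi$ on $X$ and the identity elsewhere. If $\mathcal{O}$ is infinite, then in particular $\varphi(C) \neq C$, and I define $\psi$ to equal $\varphi$ on $C$, $\varphi^{-1}$ on $\varphi(C)$, and the identity elsewhere.

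In either case, $\psi$ clearly has finite motion (at most $|X|$ or $2|C|$), and $\psi$ is nontrivial because no vertex of $C$ is fixed by $\varphi$ (as $C \cap V_{\text{fix}} = \emptyset$). The step that requires actual verification is that $\psi$ is a graph automorphism. Edges inside the modified region are preserved because $\varphi$ (respectively $\varphi^{-1}$) is an automorphism and the region is mapped into itself. Edges lying entirely outside the modified region are fixed pointwise. The remaining edges, crossing between the modified region and its complement, must, by the second observation above, have their outside endpoint in $V_{\text{fix}}$; on those edges $\psi$ agrees with $\varphi$ (or $\varphi^{-1}$) on the inside endpoint and acts as the identity on the outside one, which coincides with how $\varphi$ (resp.~$\varphi^{-1}$) acts since $V_{\text{fix}}$ is fixed pointwise, so adjacency is preserved.

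The main obstacle is not conceptual but bookkeeping: one has to check the three types of edges cleanly in both cases. The key conceptual input that makes this bookkeeping succeed is the elementary but crucial remark that the only way an edge can leave a component of $G - V_{\text{fix}}$ is through a vertex of $V_{\text{fix}}$, so the ``boundary behavior'' of $\psi$ automatically matches that of $\varphi$.
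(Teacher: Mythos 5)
Your proof is correct and uses the same underlying idea as the paper: build a nontrivial automorphism of finite motion supported on finitely many finite components, contradicting infinite motion. The difference is one of care. The paper's proof is a single sentence that simply declares $\varphi'$ to agree with $\varphi$ on $C$ and be the identity elsewhere; taken literally, that map is not even a bijection when $\varphi(C)\neq C$ (the component $\varphi(C)$ would then have two preimages). Your splitting on whether the orbit $\{\varphi^i(C)\}$ is finite or infinite is exactly what is needed to repair this: in the finite case you sum the orbit into a $\varphi$-invariant finite set $X$ and use $\varphi$ there; in the infinite case $C$, $\varphi(C)$, $\varphi^{-1}(C)$ are pairwise distinct, so your involution swapping $C$ and $\varphi(C)$ is well-defined. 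Your ``second observation'' (edges leaving a component of $G-V_{\text{fix}}$ must pass through $V_{\text{fix}}$) is precisely the point that makes the boundary bookkeeping work and is tacit in the paper's phrase ``easily seen.'' A tiny further detail worth stating explicitly in the infinite-orbit case is that there is no edge between $C$ and $\varphi(C)$ -- this is immediate from the same observation, since such an edge would force them into the same component -- but apart from such microscopic additions the argument is complete. So: same route as the paper, but you actually close a gap the paper's one-line proof leaves open.
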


\begin{proof}
If there was a finite component $C$ then we could define an automorphism $\varphi'$ which coincides with $\varphi$ on this component and fixes every vertex $v \notin C$. This automorphism is easily seen to have finite motion, which contradicts $G$ having infinite motion.
\end{proof}

Notice that in the above lemma we do not require that the graph is connected  or locally finite, in fact it may even be uncountable. However, if $G$ is locally finite then Lemma \ref{lem:infcomp} implies that for every automorphism $\varphi$ of $G$ with infinite motion there is a ray which contains no fixed point of $\varphi$. If furthermore $G$ is connected and the automorphism $\varphi$ has at least one fixed point we get the following result.

\begin{lem}
\label{lem:disjointray}
Let $G$ be a connected locally finite graph with infinite motion, let $\varphi \in \Aut (G)$ and assume that there is a vertex $v \in V$ such that $\varphi (v) = v$. Then every component of $G - V_{\text{fix}}$ contains a ray $\gamma$ which is mapped to a disjoint ray $\gamma'$.
\end{lem}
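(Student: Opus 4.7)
The plan is to split the argument into two cases according to whether $\varphi$ stabilizes the component $C$ of $G - V_{\text{fix}}$ setwise. Since $V_{\text{fix}}$ is $\varphi$-invariant, $\varphi$ permutes the components of $G - V_{\text{fix}}$, so this dichotomy is exhaustive.

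If $\varphi(C) \ne C$, then $\varphi(C)$ is a different component of $G - V_{\text{fix}}$, hence vertex-disjoint from $C$. By Lemma~\ref{lem:infcomp} the subgraph $C$ is infinite, and it inherits local finiteness and connectedness from $G$, so K\"onig's lemma furnishes a ray $\gamma \subseteq C$; its image $\varphi(\gamma) \subseteq \varphi(C)$ is then automatically disjoint from $\gamma$.

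The substantive case is $\varphi(C) = C$, in which $\psi := \varphi\res{V(C)}$ is a fixed-point-free automorphism of $C$. Here the hypothesis that $\varphi$ fixes a vertex $v$ enters crucially: because $\varphi$ preserves $d(v,\cdot)$, every $\psi$-orbit lies inside a single sphere $S_v(n) \cap V(C)$ and is therefore finite (of size at least $2$). My plan is then to form the quotient graph $\bar C := C/\langle\psi\rangle$, whose vertices are the $\psi$-orbits and in which two orbits $O, O'$ are adjacent whenever some $u \in O$ is $C$-adjacent to some $u' \in O'$. This $\bar C$ is connected (as a quotient of a connected graph), infinite (since an infinite set is not a finite union of finite sets, so the finite $\psi$-orbits must be infinitely many), and locally finite (each orbit is finite and its vertices have finite $C$-degrees). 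A further application of K\"onig's lemma therefore yields a ray $\bar\gamma = O_0, O_1, O_2, \ldots$ in $\bar C$ through pairwise distinct orbits.

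It remains to lift $\bar\gamma$ to a ray $\gamma = u_0, u_1, \ldots$ in $C$. Pick $u_0 \in O_0$ arbitrarily; inductively, given $u_i \in O_i$, the adjacency $O_i \sim O_{i+1}$ in $\bar C$ provides some edge $w \sim w'$ of $C$ with $w \in O_i$ and $w' \in O_{i+1}$, and because $\langle\psi\rangle$ acts transitively on the finite orbit $O_i$, some power $\psi^k$ satisfies $\psi^k(w) = u_i$. Applying $\psi^k$ to the edge yields $u_i \sim \psi^k(w')$, and I take $u_{i+1} := \psi^k(w') \in O_{i+1}$. The resulting ray has vertices in pairwise distinct $\psi$-orbits, so $\psi(u_i)$ can equal no $u_j$ (different orbit for $j \ne i$, and $\psi(u_i) \ne u_i$ by fixed-point-freeness), giving $\gamma \cap \varphi(\gamma) = \emptyset$. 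The main delicate point is precisely this lifting: it is a priori unclear that the particular vertex $u_i$ chosen at each step admits a neighbor in $O_{i+1}$, but the $\psi$-symmetry of the edge relation lets us transport the witnessing edge $w \sim w'$ to one based at $u_i$.
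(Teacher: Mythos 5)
Your proof is correct, but in the substantive case $\varphi(C)=C$ it follows a genuinely different route from the paper's. The paper picks a \emph{fixed} vertex $v_0$ that is adjacent to $C$ (such a vertex exists on any path from $v$ to $C$), builds a BFS spanning tree of $C\cup\{v_0\}$ rooted at $v_0$, and takes a tree-ray starting at a neighbour of $v_0$; its vertices then lie at pairwise distinct distances from $v_0$, and since $\varphi$ preserves these distances and fixes no vertex of $C$, the image ray is automatically disjoint. You instead quotient $C$ by the cyclic group $\langle\psi\rangle$, using the same underlying fact (orbits sit inside spheres $S_v(n)$, hence are finite) to certify that the orbit graph $\bar C$ is infinite, connected and locally finite, then extract a ray there and lift it by transporting witnessing edges with powers of $\psi$. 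Both arguments ultimately rest on distance preservation from a fixed base point together with local finiteness, but the paper avoids the quotient construction and the edge-by-edge lifting (correctly flagged by you as the delicate step), which makes its version noticeably shorter; your version has the mild structural advantage of never needing to locate a fixed vertex adjacent to $C$, and packages the finiteness of orbits in a way that could be reused in settings where BFS trees are less convenient.
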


\begin{proof}
Let $C$ be a component of $G-V_{\text{fix}}$. First notice that there must be a ray in $C$ since $G$ is locally finite and $C$ is infinite by Lemma \ref{lem:infcomp}.

Any two vertices in $C$ are connected by a path which does not use any vertex in $V_{\text{fix}}$. Clearly the image of such a path is again a path which does not contain any vertex in $V_{\text{fix}}$. Hence if some vertex in $C$ has an image outside of $C$ then so do all vertices of $C$. So in this case each ray in $C$ is mapped to a disjoint ray.

Now assume that $C$ is fixed by $\varphi$. Choose a fixed point $v_0$ of $\varphi$ which is adjacent to some vertex in $C$. Note that such a vertex $v_0$ must exist because there is a path connecting $C$ to $v$. Consider the graph $G'$ which is obtained from $C$ by adding $v_0$ and all edges between $v_0$ and $C$.

Using breadth-first-search construct a spanning tree $T$ of $G'$ with root $v_0$. Note that $\varphi$ acts on $G'$ as an automorphism. Since every  automorphism is an isometry it holds that for every $w \in C$ the vertices $w$ and $\varphi (w)$ have the same distance from $v_0$ in $G'$. Thus they also have the same distance from $v_0$ in $T$. 

Choose a ray $\gamma$ in $T$ which starts at a neighbour of $v_0$ but does not use $v_0$. Then all vertices in $\gamma$ have different distances from $v_0$. Since no $w \in \gamma$ is mapped to itself it is clear that $\gamma$ must be mapped to a disjoint ray.
\end{proof}

\section{Proof of the main result}
\label{sec:proof}

Before proving Theorem \ref{thm:growthmotion} we would like to provide a sketch of the proof to explain the main ideas.

By Lemma \ref{lem:fixroot} we can assume that there is a vertex $v_0$ which is fixed by every automorphism that we still need to break. By Lemma \ref{lem:higher} every such automorphism fixes every sphere $\sph{i}$ as a set, so it makes sense to speak of restricted motion.

Now assume that we would like to break the set $A$ of all automorphisms that act nontrivially on $\sph{m}$. We know by Lemma \ref{lem:higher} that every $\varphi \in A$ also acts nontrivially on every higher sphere. We choose $k$ ``large enough'' (we will specify later, how large it must be) and split up the set of spheres $\sph{m+1}, \ldots, \sph{m+k}$ in some small sets $P_i$ and a remainder set $P_r$. Following a suggestion of Imrich we partition $A$ into several sets $A_i$ of automorphisms whose motion on one of the spheres $\sph{m+1}, \ldots, \sph{m+k}$ is small and a remainder set $A_r$ in which every automorphism has large restricted motion on each of those spheres.

Since the cardinality of the sets $A_i$ is small we can apply Lemma \ref{lem:motion} to break all of $A_i$ by a coloring of $P_i$ although the motion of the elements of $A_i$ may be small. Similarly we can break all automorphisms in $A_r$ by a coloring of $P_r$ since the motion is large.

Having broken all automorphisms in $A$ we proceed inductively breaking all automorphisms which act nontrivially on $\sph {m+k}$. In the limit we obtain a coloring which breaks every nontrivial automorphism because every such automorphism has to act nontrivially on some sphere.

Now let us turn to a detailed proof of Theorem \ref{thm:growthmotion}.

\begin{proof}[Proof of Theorem \ref{thm:growthmotion}]
First of all apply Lemma \ref{lem:fixroot} with $\delta = \frac \varepsilon 2$ and an arbitrarily chosen vertex $v_0$. This gives a coloring of a small fraction of the spheres which breaks all automorphisms that do not fix $v_0$. Recall from the statement of the theorem that $\delta > 0$ is arbitrary, hence we can assume $0 < \varepsilon < 1$. As mentioned before, every unbroken automorphism must fix every sphere $S_{v_0}(i)$ as a set.

Now assume that all spheres up to $\sph{m}$ have already been colored while $\sph{m+1}$ is still uncolored. We know that there is a constant $c$ such that for large $n$ it holds that
\[
\vert B_{v_0} \! (n) \vert \leq c \, 2^{(1-\varepsilon) \frac{\sqrt{n}}{2}}.
\]
By increasing the constant $c$ we can guarantee that this inequality holds for every $n$. Next notice that 
\[
\sqrt{m+k} \leq \sqrt m + \sqrt k
\]
and hence

\begin{align*}
\vert B_{v_0} \! (m+k) \vert 
&\leq c \, 2^{(1-\varepsilon) \frac{\sqrt{m+k}}{2}} \\
&\leq c \, 2^{(1-\varepsilon) \sqrt{\frac{m}{2}}} \, 2^{(1-\varepsilon) \frac{\sqrt{k}}{2}} \\
&= \tilde c \, 2^{(1-\varepsilon) \frac{\sqrt{k}}{2}}
\end{align*}
where $\tilde c $ depends on $c$ and $m$. Note that this clearly implies that for every $i \leq m+k$ we have
\begin{equation}
\label{ineq:spheresize} \left \vert \sph{i} \right \vert < \tilde c \, 2^{(1-\varepsilon) \frac{\sqrt{k}}{2}}.
\end{equation}

Now choose $k$ larger than $k_0$ given by Lemma \ref{lem:fixroot} and large enough that each of the following inequalities holds:
\begin{align}
\label{ineq:logc} \log {\tilde c} &< \frac {\varepsilon \sqrt{k}}8 ,\\
\label{ineq:logk} \log  k &< \frac {\varepsilon \sqrt{k}}8 ,\\
\label{ineq:sqrtk} 4 \sqrt k &< \frac 1 2 \, \varepsilon \left( 1- \frac \varepsilon 2\right ) k ,\\
\label{ineq:csqrtk} \tilde c \, \frac {\sqrt {k}} 2 &< \frac \varepsilon 4\, k.
\end{align}

Notice that these inequalities are by no means independent. For example it is easy to see that if $\tilde c$ is large (which usually will be the case) then \eqref{ineq:csqrtk} implies \eqref{ineq:logc} and \eqref{ineq:sqrtk}. However, we will need all four inequalities in the proof so we might as well explicitly require them.

Next consider the spheres $\sph{m+1}, \ldots, \sph{m+k}$. We know that at least $(1-\varepsilon) k$ of these spheres are still uncolored, denote those spheres by $S_1, \ldots, S_l$ ordered in a way that $S_i$ lies closer to $v_0$ than $S_{i+1}$.

Define 
\begin{align*}
\kappa &= \ceil{2 \sqrt {k} \left ( 1-\frac{\varepsilon}{2}\right)},\\
r &= \ceil{(1-\varepsilon)\frac {\sqrt {k}} 2} +1.
\end{align*}
For $1 \leq i \leq r-1$ let $P_i$ be the set of vertices contained in $S_{(i-1) \kappa +1}, \ldots, S_{i\kappa}$. The vertices contained in $S_{(r-1)\kappa}, \ldots, S_l$ are collected in the set $P_r$. Obviously $P_i$ contains $\kappa$ spheres for $i<r$. Let us briefly check how many spheres there are in $P_r$:
\begin{align*}
l- \sum _{i=1}^{r-1} \kappa 
& \geq \left(1- \frac \varepsilon 2 \right) k - \kappa (r-1) \\
& \geq \left(1- \frac \varepsilon 2 \right) k - \left (2 \sqrt {k} \left ( 1-\frac{\varepsilon}{2}\right) +1 \right ) \left ( (1-\varepsilon)\frac {\sqrt {k}} 2 +1 \right )\\
& = \left(1- \frac \varepsilon 2 \right) k - \left (\left ( 1-\frac{\varepsilon}{2}\right)(1-\varepsilon)k + 2 \sqrt {k} \left ( 1-\frac{\varepsilon}{2}\right) + (1-\varepsilon) \frac {\sqrt {k}} 2 + 1\right)\\
& \geq \varepsilon \left(1- \frac \varepsilon 2 \right) k - \left (2  + \frac 1 2 + 1\right) \sqrt k \\
& \geq \varepsilon \left(1- \frac \varepsilon 2 \right) k - 4 \sqrt k \\
& > \frac \varepsilon 2 \left(1- \frac \varepsilon 2 \right) k.
\end{align*}
where the last inequality follows from \eqref{ineq:sqrtk}. So altogether we have partitioned the spheres $S_1, \ldots, S_l$ into $r-1$ sets of $\kappa$ spheres and a set of  more than $\frac \varepsilon 2 \left(1- \frac \varepsilon 2 \right) k$ spheres.

Next we would like to partition the set $A$ of automorphisms that act nontrivially on $\sph m$ into sets $A_i$ such that
\[
m(A_i)\res{P_i} > 2 \log \left \lvert A_i \res{P_i} \right \rvert.
\]
This will enable us to apply Lemma \ref{lem:motion} to break all permutations in $A_i$ by a coloring of the set $P_i$. If we color every $P_i$ according to this coloring we obtain a partial coloring of $G$ which breaks every automorphism that acts nontrivially on $\sph{m}$.

In order to define the sets $A_i$ let 
\[
A_i'  =  \left \{ \varphi \in A \mid \exists S_j \subseteq P_{i'}, r \geq i' >i \colon m(\varphi)\res{S_j}\leq 2^i  \right \}.
\] 
In words, $A_i'$ contains all automorphisms $\varphi$ which move at most $2^i$ vertices in some sphere that lies above $P_i$. Define $A_i = A_i' \setminus A_{i-1}'$ for $1 \leq i<r$ and $A_r = A \setminus \bigcup_{i<r} A_i$ where $A_0' = \emptyset$. Notice that for $S_j \subseteq P_i$ and $\varphi \in A_i$ it holds that
\[
m(\varphi)\res{S_j} > 2^{i-1}
\]
because otherwise $\varphi$ would be contained in $A_{i-1}'$.

Now that we have partitioned both the uncolored spheres and the automorphisms that we wish to break, let us check if we can apply Lemma \ref{lem:motion} to the sets $A_i$ and $P_i$.  We establish an upper bound for $\vert  A_i\res{P_i} \vert$ and a lower bound for the motion an automorphism $\varphi \in A_i$ has on $P_i$.

Clearly $\vert A_i \res{P_i} \vert \leq \vert A_i' \res{P_i} \vert$. First we consider the case $i < r$. The case $i = r$ will be treated later. 

To estimate the cardinality of $ A_i'\res{P_i}$ notice that by Lemma \ref{lem:higher} every permutation in a sphere $S_j$ in $P_{i+1}, \ldots, P_r$ induces a unique permutation on $P_i$. Hence we only need to count the permutations which move at most $2^i$ elements in one of these spheres. Since there are at most $k$ such spheres and the cardinality of each of them is bounded from above by $\floor{\tilde c 2^{(1-\varepsilon)\frac{\sqrt k}{2}}}$ according to \eqref{ineq:spheresize}, we get the following estimate:
\begin{align*}
	\vert  A_i \res{P_i} \vert 
	&\leq k \binom{\floor{\tilde c 2^{(1-\varepsilon) \frac {\sqrt{k}} 2}}}{2^i} \left( 2^i \right) ! \\
	&\leq k \frac{\left( \tilde c 2^{(1-\varepsilon) \frac {\sqrt{k}} 2}\right)^{2^i}}{\left( 2^i \right) !} \left( 2^i \right) ! \\
	&=2^{\log k + \left( \log {\tilde c} + (1-\varepsilon) \frac {\sqrt{k}} 2\right) 2^i} \\
	&\leq 2^{\left( \log k +  \log {\tilde c} + (1-\varepsilon) \frac {\sqrt{k}} 2\right) 2^i} \\
	&< 2^{\left (1-\frac{\varepsilon} 2 \right ) \frac {\sqrt{k}} 2 2^i}.
\end{align*}
The last inequality follows from \eqref{ineq:logc} and \eqref{ineq:logk}.

In order to estimate the motion of $\varphi \in A_i$ on $P_i$ recall that there are $\kappa$ spheres in $P_i$ and $\varphi$ moves at least $2^{i-1}$ elements in each of the spheres. Hence we get the following inequality.
\begin{align*}
m(\varphi) \geq \kappa 2^{i-1} \geq 2\sqrt{k} \left( 1- \frac \varepsilon 2 \right ) 2^{i-1}.
\end{align*}

If we combine the two estimates we obtain
\begin{align*}
\frac{m(A_i) \res{P_i}}2 \geq \sqrt{k} \left( 1- \frac \varepsilon 2 \right ) 2^{i-1} > \log{\vert A_i \res{P_i} \vert}
\end{align*}
which is exactly the inequality in the condition of Lemma \ref{lem:motion} for $1 \leq i < r$. So we can apply the motion Lemma in order to break all of $A_i$ by a suitable coloring of $P_i$.

Finally we need to verify that the inequality also holds true for $A_r$ and $P_r$. By Lemma \ref{lem:higher} the number of permutations in $ A_r \res{P_r}$ is bounded by the number of permutations of $S_{m+k}$, that is
\[
\vert A_r\res{P_r} \vert 
\leq \vert S_{m+k} \vert ! 
\leq \floor{\tilde c 2^{(1-\varepsilon) \frac {\sqrt{k}} 2}} !
\leq 2^{\left( \log \tilde c + (1-\varepsilon) \frac {\sqrt{k}} 2 \right ) \tilde c 2^{(1-\varepsilon) \frac {\sqrt{k}} 2}}
< 2^{ \tilde c  (1- \frac \varepsilon 2) \frac {\sqrt{k}} 2 2^{(1-\varepsilon) \frac {\sqrt{k}} 2}}.
\]
The last inequality easily follows from \eqref{ineq:logc}.

In order to estimate the motion notice that every $\varphi \in A_r$ moves at least $2^{r-1}$ vertices in each sphere in $P_r$. Since there are more than $\frac \varepsilon 2 \left(1- \frac \varepsilon 2 \right) k$ spheres in $P_r$ we get
\[
 m(\varphi) > \frac \varepsilon 2 \left(1- \frac \varepsilon 2 \right) k 2^{r-1} 
 \geq \frac \varepsilon 2 \left(1- \frac \varepsilon 2 \right) k 2^{(1-\varepsilon) \frac{\sqrt k} 2}.
\]
Putting these estimates together we obtain the inequality in the condition of Lemma \ref{lem:motion}
\[
\frac{m(A_r)\res{P_r}}{2} > \frac \varepsilon 4 \left(1- \frac \varepsilon 2 \right) k 2^{(1-\varepsilon) \frac{\sqrt k} 2}
>  \tilde c  \left(1- \frac \varepsilon 2 \right) \frac {\sqrt{k}} 2 2^{(1-\varepsilon) \frac {\sqrt{k}} 2}
> \log \vert  A_r \res{P_r}\vert
\]
where the middle inequality is a direct consequence of \eqref{ineq:csqrtk}. This proves that we can apply Lemma \ref{lem:motion} to find a 2-coloring of $P_r$ which breaks every automorphism in $A_r$.

So we have shown that we can break all of $A$ by a 2-coloring of the part of the spheres $\sph {m+1}, \ldots, \sph{m+k}$ that has not been colored when we applied Lemma \ref{lem:fixroot}. 

Iteratively proceed by breaking all automorphisms that fix $v_0$ and act nontrivially on $\sph{m+k}$. Clearly in the limit this yields a coloring that breaks every nontrivial automorphism. Simply note that every nontrivial automorphism that fixes $v_0$ has to act nontrivially on some sphere $\sph n$ and thus also on every higher sphere.
\end{proof}

The reader may have noticed that in the proof we have only used that the size of the spheres is bounded by $2^{(1-\varepsilon)\frac {\sqrt n} 2}$. Since the ball $B_{v_0}\! (n)$ is the union of all spheres of radius at most $n$ one might wonder if the same proof gives a better bound on the growth of the graph. This is however not the case since
\[
	\sum_{k=1}^n 2^{(1-\varepsilon)\frac {\sqrt k} 2} 
	\leq c \int_{0}^n 2^{(1-\varepsilon)\frac {\sqrt x} 2} \, dx 
	\leq c' \sqrt n 2^{(1-\varepsilon)\frac {\sqrt n} 2} 
	\leq 2^{(1-\frac\varepsilon 2)\frac {\sqrt n} 2}
\]
for large values of $n$ and suitable constants $c, c'$. So if the sphere of radius $n$ has size $\bigO \left( 2^{(1-\varepsilon)\frac {\sqrt n} 2}\right)$, then the same holds true for the ball of radius $n$ with a slightly different $\varepsilon$.

\section{Growth of ends}
\label{sec:ends}

In this section we would like to outline a possible generalisation of Theorem \ref{thm:growthmotion} to graphs with countably many ends. For readers not familiar with the notion of ends we refer to \cite{MR2159259} for an accessible introduction to the topic. Before stating the extension, however, we need to introduce the notion of the growth of an end.

If we consider an end $\omega$ of a graph and a base vertex $v_0$ we can define the set 
\[
S_{v_0}^\omega(n) = \left \lbrace v \in \sph n \mid v \text{ lies in the same component of } G \setminus B_{v_0}(n-1) \text{ as } \omega \right \rbrace.
\]
An end $\omega$ has \emph{growth $\bigO(f(n))$} if the cardinality of $S_{v_0}^\omega(n)$ is $\bigO(f(n))$. It is worth noting that---just like the growth of a graph---the growth of an end does not depend on the choice of the basepoint $v_0$.

In \cite{cuimle} the result that every graph of growth $o(\frac{n^2}{\log n})$ is 2-distinguishable is extended to graphs with countably many ends where no end grows faster than $o(\frac{n}{\log n})$. Note that the definition of growth of ends uses spheres rather than balls, so for one ended graphs the two results coincide.

Since the proof of our main result is somewhat similar to the proof of the corresponding result in \cite{cuimle}, it is not surprising that we can use the same arguments to extend it to graphs where the growth of every end is $\bigO(2^{(1-\varepsilon)\frac {\sqrt n} 2})$.

\begin{thm}
\label{thm:ends}
Let $G$ be a connected graph with countably many ends each of which has growth $\bigO(2^{(1-\varepsilon)\frac {\sqrt n} 2})$ for the same fixed $\varepsilon$. If $G$ has infinite motion then $G$ is 2-distinguishable.
\end{thm}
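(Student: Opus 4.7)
The strategy is to adapt the proof of Theorem \ref{thm:growthmotion} and handle the ends one at a time, in the spirit of the extension carried out in \cite{cuimle}. As before, first apply Lemma \ref{lem:fixroot} with a small $\delta$ to break every automorphism not fixing a chosen basepoint $v_0$. By Lemma \ref{lem:higher} each remaining automorphism fixes every ball $B_{v_0}(n)$ setwise, and therefore permutes the components of $G\setminus B_{v_0}(n-1)$ and the set of ends. Fix an enumeration $\omega_1,\omega_2,\ldots$ of the (countable) end set and split the automorphisms still to be broken into $A^{\mathrm{fix}}$, those fixing every end, and $A^{\mathrm{perm}}$, those permuting the ends nontrivially.

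For $A^{\mathrm{fix}}$, every $\varphi$ preserves each end-sphere $S_{v_0}^{\omega_i}(n)$ setwise, and the hypothesis gives $|S_{v_0}^{\omega_i}(n)|\leq \tilde c\cdot 2^{(1-\varepsilon)\sqrt n/2}$, exactly the role played by inequality \eqref{ineq:spheresize} in the proof of Theorem \ref{thm:growthmotion}. The estimates carried out there can then be repeated verbatim inside the cone of each $\omega_i$, with $\sph n$ replaced by $S_{v_0}^{\omega_i}(n)$ and $A$ replaced by the automorphisms of $A^{\mathrm{fix}}$ which act nontrivially on that cone. Processing the ends one at a time in disjoint radius windows $[m_i,m_i+k_i]$ gives a partial coloring that breaks all of $A^{\mathrm{fix}}$, and since each step colors only a $\delta$-fraction of its end-sphere vertices, the density bounds of Lemma \ref{lem:fixroot} are preserved throughout the iteration.

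To handle $A^{\mathrm{perm}}$, observe that if $\varphi(\omega_i)=\omega_j$ with $i\neq j$ then for all sufficiently large $n$ the cones of $\omega_i$ and $\omega_j$ are disjoint components of $G\setminus B_{v_0}(n-1)$ and $\varphi$ maps $S_{v_0}^{\omega_i}(n)$ bijectively onto $S_{v_0}^{\omega_j}(n)$. The Theorem \ref{thm:growthmotion} construction offers substantial freedom in the choice of coloring of each cone (the motion lemma produces many valid breaking colorings, not just one). Use this freedom to plant, along a ray to $\omega_i$ provided by Lemma \ref{lem:disjointray}, a short signature uniquely determined by $i$ (for instance, a length-$i$ monochromatic block flanked by the opposite color). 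Two cones then carry structurally incomparable colorings, so no automorphism can carry the first onto the second and $\varphi$ is broken.

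The main obstacle is coordinating the two mechanisms: the planted signatures must not inflate the coloring density beyond what the motion estimates for $A^{\mathrm{fix}}$ tolerate, and the infinitely many end-windows together with the residual colorings from Lemma \ref{lem:fixroot} must all remain compatible. This is dealt with by making each successive radius window $[m_i,m_i+k_i]$ sufficiently far out that the signature of length $i$ is a vanishing fraction of $k_i$, and by halving the density budget allocated to each new end, so that the overall density requirement of Lemma \ref{lem:fixroot} is met in the limit, exactly as in the corresponding extension in \cite{cuimle}.
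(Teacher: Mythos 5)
Your handling of the end-fixing automorphisms $A^{\mathrm{fix}}$ is broadly in the spirit of the paper's second step (restrict the Theorem~\ref{thm:growthmotion} argument to each cone $S_{v_0}^{\omega}(\cdot)$), although you should be explicit that each end must be revisited \emph{infinitely often}: an automorphism fixing $v_0$ and the end $\omega_i$ may act trivially on the cone of $\omega_i$ for arbitrarily many low radii, so a single finite window $[m_i,m_i+k_i]$ per end does not catch it. The paper arranges this by running over a surjection $f\colon\mathbb N\to\mathbb N$ with $f^{-1}(i)$ infinite for every $i$; your phrasing ``processing the ends one at a time'' suggests a single pass, which leaves a gap.

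The genuine divergence is in the end-permuting case $A^{\mathrm{perm}}$. The paper observes that the sets $S_{v_0}^{\omega}(n_i)$ along a sparse sequence $(n_i)$ of uncolored radii naturally form a rooted, leafless, infinite tree (the root is $v_0$, with $S_{v_0}^{\omega}(n_{i-1})$ joined to $S_{v_0}^{\omega}(n_i)$), and that any automorphism of $G$ fixing $v_0$ but moving some end induces a \emph{nontrivial} automorphism of this tree. Invoking the known $2$-distinguishability of infinite leafless trees~\cite{MR2302536}, the paper colors each set $S_{v_0}^{\omega}(n_i)$ monochromatically according to a distinguishing coloring of the tree, killing all of $A^{\mathrm{perm}}$ at once -- and without using countability of the end space or the growth bound at all. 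Your ``signature'' idea is a plausible-sounding substitute, but as written it does not close. You place the length-$i$ signature along a single ray in the cone of $\omega_i$ inside your window $[m_i,m_i+k_i]$, and place the windows disjointly in radius. An automorphism $\varphi$ with $\varphi(\omega_i)=\omega_j$ maps $S_{v_0}^{\omega_i}(n)$ onto $S_{v_0}^{\omega_j}(n)$ \emph{at the same radius} $n$. At the radii in $[m_i,m_i+k_i]$ the cone of $\omega_j$ carries no planted signature -- its colors there are whatever the motion-lemma step for $A^{\mathrm{fix}}$ happened to produce -- so there is no guarantee that the image of your signature ray fails to match. ``Two cones carry structurally incomparable colorings'' is an assertion, not an argument, and the motion lemma's ``substantial freedom'' is not enough: you would still need a simultaneous argument that the freedom can be exercised consistently across infinitely many pairs $(i,j)$ at every relevant radius, which is precisely the coordination problem you flag but do not solve. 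The paper's tree reduction avoids this entirely.

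You should also be aware that the auxiliary density bookkeeping is different: the paper invokes Lemma~\ref{lem:fixroot} with $\delta=\varepsilon/4$ (not $\varepsilon/2$) so that there is a budget of uncolored spheres left over both for the tree coloring (step one) and for the iterated cone arguments (step two).
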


\begin{proof}
First of all---just as in the proof of Theorem \ref{thm:growthmotion}---find a partial coloring $c$ which fixes a vertex $v_0$. The only difference is, that we choose $\delta = \frac{\varepsilon}{4}$ rather than $\delta = \frac{\varepsilon}{2}$.

The rest of the proof divides into two steps. First we extend $c$ to a partial coloring that breaks every automorphism of $G$ which does not fix the set of ends of $G$ pointwise, still leaving a large fraction of the vertices uncolored. Then we use the same argument as in the proof of Theorem \ref{thm:growthmotion} in order to color the rest of the vertices such that the remaining automorphisms are broken.

For the first step choose an increasing sequence $n_i$ such that the spheres $\sph{n_i}$ are still uncolored and $n_i-n_{i-1}>\frac 4 \varepsilon$. Consider the set of spheres $\sph{n_i}$. We wish to color those spheres such that every automorphism that fixes $v_0$ and preserves the coloring also fixes every end of $G$. Notice that after coloring these spheres the fraction of uncolored spheres will still be at least $1- \frac{\varepsilon}{2}$.

It is not hard to see that the sets $S_{v_0}^\omega\left ( n_i\right )$ carry a rooted tree structure. Consider $v_0$, the root,  which is connected by an edge to every $S_{v_0}^\omega(n_1)$. Draw an edge from $S_{v_0}^\omega\left ( n_{i-1} \right )$ to $S_{v_0}^\omega\left ( n_i \right )$. To see that this is indeed a tree just notice that if $S_{v_0}^{\omega_1}(n)=S_{v_0}^{\omega_2}(n)$, then $S_{v_0}^{\omega_1}(m)=S_{v_0}^{\omega_2}(m)$ for every $m < n$, so there cannot be any circles.

Next, notice that every automorphism $\phi \in \Aut (G)$ that fixes $v_0$ but does not fix all ends also acts as a nontrivial automorphism on this rooted tree. By \cite{MR2302536} the distinguishing number of infinite leafless trees is at most $2$, therefore it is possible to 2-color the sets $S_{v_0}^\omega\left ( n_i \right )$ such that every such automorphism is broken. It is also worth noting that so far we did not use the countability of the end space of $G$, nor did we use the growth condition on the ends.

For the second step of the proof let us check which automorphisms of $G$ have not yet been broken. Denote the set of such automorphisms by $A$. We already know that every $\varphi \in A$ must fix $v_0$ as well as every end of $G$. Lemma \ref{lem:disjointray} implies that every automorphism of $G$ moves some ray of $G$ into a disjoint ray. Hence every automorphism in $A$ permutes some rays which belong to the same end $\omega$.

For an end $\omega$ of $G$ let $A^{\omega}$ be the set of permutations in $A$ which move some rays in $\omega$. Note that these sets are not necessarily disjoint but their union is all of $A$. Also notice that every automorphism $\varphi \in A^{\omega}$ acts nontrivially on every $S_{v_0}^{\omega}(n)$ from some index $n_0$ on.

Furthermore, let $(\omega_i)_{i \in {\mathbb N}}$ be an enumeration of the ends of $G$. Choose a function $f\colon {\mathbb N} \to {\mathbb N}$ such that $f^{-1}(i)$ is infinite for every natural number $i$. Assume that all spheres up to $S_{v_0}(m)$ have been colored in the first $i-1$ steps. In the $i$-th step we would like to color some more spheres in order to break all automorphisms in $A^{\omega_{f(i)}}$ that act nontrivially on $S_{v_0}^{\omega_{f(i)}}(n)$ for every $n>m$. Since we only colored an $\frac{\varepsilon}{2}$-fraction of all spheres so far, this can be achieved by exactly the  same arguments as in the proof of Theorem~\ref{thm:growthmotion}.

As we already mentioned, every automorphism that was not broken in the first step acts nontrivially on the rays of some end. Since, in the procedure described above, every end is considered infinitely often, it is clear that every such automorphism will eventually be broken. This completes the proof.
\end{proof}

The same proof still works if we can partition the (possibly uncountably many) ends into countably many classes such that the combined growth of all ends contained in a class $\mathcal C$ is $\bigO(2^{(1-\varepsilon)\frac {\sqrt n} 2})$. By combined growth we simply mean the growth of the cardinality of the sets
\[
	S_{v_0}^{\mathcal C}(n) = \bigcup_{\omega \in \mathcal C}S_{v_0}^\omega(n).
\]
This can be seen as a generalisation of both Theorem \ref{thm:growthmotion} (all ends in the same class) and Theorem \ref{thm:ends} (every end has its own class).

\section*{Acknowledgements}
The author would like to thank Wilfried Imrich and Johannes Cuno for reading earlier versions of this manuscript as well as making many useful suggestions.

\bibliography{sources}
\bibliographystyle{abbrv}
\end{document}